\theoremstyle{plain}
\newtheorem{theorem}{Theorem}[section]
\newtheorem{proposition}[theorem]{Proposition}
\newtheorem{lemma}[theorem]{Lemma}
\newtheorem{corollary}[theorem]{Corollary}
\theoremstyle{definition}
\newtheorem{example}[theorem]{Example}
\newtheorem{question}[theorem]{Question}
\begin{document}

\title{Indeterminacy Loci of Iterate Maps}
%\\(Draft--not for distribution)
\author{Hongming Nie}
\address{Indiana University, 831 East Third Street, Rawles Hall, Bloomington, Indiana 47405, U.S.A. }
\email{nieh@indiana.edu}
\date{\today}
\maketitle

%%%%%%%%%%%%%%%%%%%%%%%%%%%%%%%%%%%%%%%%%%%%%%%%%%%%%%%%%%%%%%%%%%%
\begin{abstract}
We consider the indeterminacy locus $I(\Phi_n)$ of the iterate map $\Phi_n:\overline{M}_d\dashrightarrow\overline{M}_{d^n}$, where $\overline{M}_d$ is the GIT compactification of the moduli space $M_d$ of degree $d$ complex rational maps. We give natural conditions on $f$ that imply $[f]\in I(\Phi_n)$. These provide partial answers to a question of Laura DeMarco in \cite{De2}
\end{abstract}
\section{Introduction}
For $d\ge 2$, let $\mathrm{Rat}_d$ be the space of degree $d$ complex rational maps $f:\mathbb{P}^1\to\mathbb{P}^1$, thought of as dynamical systems. Parameterizing by the coefficients, $\mathrm{Rat}_d$ is a dense subset of $\mathbb{P}^{2d+1}$. For any point $f\in\mathbb{P}^{2d+1}$, we can uniquely write $f=H_f\hat f=[H_fF_a:H_fF_b]$, where $\hat f$ is a rational map of degree at most $d$. Define 
$$I(d)=\{f=H_f\hat f\in\mathbb{P}^{2d+1}:\hat f=[a:b]\in\mathbb{P}^1\ \text{and}\ H_f(a,b)=0\}.$$
Then the indeterminacy loci of iterate maps $\Psi_n:\mathbb{P}^{2d+1}\dashrightarrow\mathbb{P}^{2d^n+1}$, sending $f$ to $f^n$, are $I(d)$ for all $n\ge 2$ \cite[Theorem 0.2]{De1}.\par 
The moduli space of degree $d$ complex rational maps is defined by the quotient space $M_d:=\mathrm{Rat}_d/\mathrm{PGL}_2(\mathbb{C})$ under the action by conjugation. Then the element $[f]\in M_d$ is the conjugacy class of $f\in\mathrm{Rat}_d$. The moduli space $M_d$ admits a compactification $\overline{M}_d$ . This is constructed as a GIT quotient of a larger semistable locus $\mathrm{Rat}_d^{ss}$ containing $\mathrm{Rat}_d$ under the action of $\mathrm{PGL}_2(\mathbb{C})$; see\cite{Si1,Si2} for definitions. In even degrees, then semistable locus $\mathrm{Rat}_d^{ss}$ coincides with the related stable locus $\mathrm{Rat}_d^s$; in odd degrees, this is no longer the case. For each $n\ge 2$, the iterate map $\Phi_n:M_d\to M_{d^n}$, sending $[f]$ to $[f^n]$, induces a rational map $\Phi_n:\overline{M}_d\dashrightarrow\overline{M}_{d^n}$. However, the iterate maps $\Phi_n$ are not regular for any $d\ge 2$ and $n\ge 2$ on $\overline{M}_d$ \cite[\S 10]{De2}. It is natural to investigate the indeterminacy locus $I(\Phi_n)$. For rational maps on varieties and indeterminacy loci, we refer \cite{Harris}.\par 
In \cite{De2}, DeMarco proved if $f\not\in I(d)$ and $f^n$ is stable for some $n\ge 2$, then $[f]\not\in I(\Phi_n)$ \cite[Lemma 4.2]{De2}. She posed the following questions.
\begin{question}\cite[Question]{De2}\label{question}
\begin{enumerate}
\item If $f\in I(d)$ is stable, is $[f]\in I(\Phi_n)$? 
\item If $f\not\in I(d)$ is stable but $f^n$ is not stable, is $[f]\in I(\Phi_n)$?
\end{enumerate}
\end{question}
In this short paper, we first give an affirmative answer to Question \ref{question} (1).
\begin{theorem}\label{thm-not-ind}
For any $d\ge 2$, if $f\in I(d)$ is stable, then $[f]\in I(\Phi_n)$.
\end{theorem}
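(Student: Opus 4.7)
The plan is to lift the question to $\mathbb{P}^{2d+1}$, use \cite[Theorem 0.2]{De1} to produce two holomorphic arcs through $f$ with distinct iterate-limits, and descend back through the GIT quotient using the stability hypothesis. Because $f\in I(d)$, the iterate map $\Psi_n:\mathbb{P}^{2d+1}\dashrightarrow\mathbb{P}^{2d^n+1}$ is not defined at $f$, so one can find two holomorphic arcs $\{f_t^{(1)}\},\{f_t^{(2)}\}\subset\mathrm{Rat}_d$ with $f_t^{(i)}\to f$ and with $F_1:=\lim_{t\to 0}(f_t^{(1)})^n\neq F_2:=\lim_{t\to 0}(f_t^{(2)})^n$ in $\mathbb{P}^{2d^n+1}$.

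Continuity of the GIT quotient $\pi_d:\mathrm{Rat}_d^{ss}\to\overline{M}_d$ gives $[f_t^{(i)}]\to[f]$ in $\overline{M}_d$, and by completeness of $\overline{M}_{d^n}$ the images $[(f_t^{(i)})^n]$ converge to well-defined points $p_1,p_2\in\overline{M}_{d^n}$. The conclusion $[f]\in I(\Phi_n)$ then reduces to arranging $p_1\neq p_2$. The stability hypothesis on $f$ enters here: it guarantees that the $\mathrm{PGL}_2$-orbit of $f$ is closed in $\mathrm{Rat}_d^{ss}$, so that different approach directions at $f$ descend to genuinely distinct approach directions at $[f]\in\overline{M}_d$, rather than being collapsed by the GIT equivalence.

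To produce two such arcs with $p_1\neq p_2$, I would put $f$ in a normal form: after conjugating one may assume $\hat f=[1:0]$, so $f=[H_f:0]$ with $H_f=yQ$ for a homogeneous polynomial $Q$ of degree $d-1$. I would then test explicit perturbations, for instance
\[
f_t^{(1)}=[\,yQ:tx^d\,]\quad\text{and}\quad f_t^{(2)}=[\,yQ+tx^d:tx^d\,],
\]
and compute $\lim_{t\to 0}(f_t^{(i)})^n$ inductively in homogeneous coordinates, in the spirit of the calculations in \cite{De1}. The expectation is that the two limits differ in a discrete $\mathrm{PGL}_2$-invariant—such as the degree of the reduced map $\hat F_i$, a multiplier at a distinguished fixed point, or the combinatorial type of the common-factor divisor—which then forces $p_1\neq p_2$.

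The main obstacle is this last verification, especially in the case where one of the limits $F_i$ itself fails to be semistable: then $p_i$ is not $\pi_{d^n}(F_i)$ but the GIT limit of $[(f_t^{(i)})^n]$, and identifying it requires locating the closed semistable orbit inside the orbit closure of $F_i$ via Silverman's stability criterion for degree $d^n$. Carrying out this analysis uniformly over every $f\in I(d)\cap\mathrm{Rat}_d^s$ and showing that two such GIT limits always persist in distinct closed orbits is where I expect the technical heart of the argument to lie.
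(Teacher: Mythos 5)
Your overall strategy---approach $f$ along two arcs in $\mathrm{Rat}_d$ and show that the limits of the $n$-th iterates give distinct points of $\overline{M}_{d^n}$---is the same as the paper's. But the step you defer to the end is not a technical afterthought; it is essentially the entire proof, and nothing you have written supplies it. Knowing from \cite[Theorem 0.2]{De1} that $f\in I(\Psi_n)$ only guarantees the existence of two arcs with distinct limits $F_1\neq F_2$ in $\mathbb{P}^{2d^n+1}$; it gives no control over whether the $F_i$ are semistable, and distinct points upstairs can perfectly well descend to the same point of the GIT quotient, so no contradiction is reached. Your appeal to stability of $f$ (``the orbit of $f$ is closed, so approach directions descend distinctly'') does not address this: what must be compared are the limits of $[(f_t^{(i)})^n]$ in $\overline{M}_{d^n}$, and for that one needs (semi)stability and explicit knowledge of the limits of the \emph{iterates}, not closedness of the orbit of $f$ itself. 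The concrete arcs you propose, $[\,yQ:tx^d\,]$ and $[\,yQ+tx^d:tx^d\,]$, make the computation harder rather than easier: their reduced maps have full degree $d$, so $(f_t^{(i)})^n$ admits no usable factorization and its limit as $t\to 0$ is not accessible via the formula $f^n=(\prod_{k}(H_f\circ\hat f^k)^{d^{n-k-1}})\hat f^n$. (They can also degenerate: if $[0:1]$ is a hole of $f$ then $x\mid Q$ and the two coordinates of $f_t^{(1)}$ share the factor $x$.)

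The paper's proof works because the perturbations are adapted to the structure forced by the hypotheses. Since $f\in I(d)$ is stable, $\hat f$ is a constant lying in $\mathrm{Hole}(f)$, every hole has depth at most $d/2$, and for $d\ge 4$ (the only case left after Lemma \ref{deg4}) there are at least three holes, normalized to $0,1,\infty$ with $\hat f=[0:1]$. The two families are $g_t=H_f\,[t:1]$ (move the constant value off the hole) and $h_t=(H_f/Y)\,[tX:Y]$ (replace the constant by a degree-one map fixing $0$ and $\infty$). For these, $\hat g_t^k$ and $\hat h_t^k$ are explicit, so $g_t^n$ and $h_t^n$ and their limits $g_n,h_n$ can be written down; the depth bounds coming from stability of $f$ show $g_n$ and $h_n$ are themselves stable, which is what legitimizes the identifications $\lim_t\Phi_n([g_t])=[g_n]$ and $\lim_t\Phi_n([h_t])=[h_n]$; and $[g_n]\neq[h_n]$ because the depths at the distinguished hole $0$ differ. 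Each of these verifications is absent from your proposal, so as it stands the argument has a genuine gap at its central point.
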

We next give a partial result for Question \ref{question} (2).
Assume $f=H_f\hat f$ satisfies the conditions in Question \ref{question} (2). Then $\deg\hat f \ge 1$, see Proposition \ref{not-constant}. Furthermore, if $d$ is odd and $f^n$ is semistable, then $\Phi_n$ is well-defined at $[f]$. Proposition \ref{odd} claims there exist such $f$ and $n$. For the even case, if $d=2$, then $\deg\hat f=1$. By Corollary \ref{deg2} and \cite[Theorem 5.1]{De2}, we know the answer is yes for Question \ref{question} (2). More general, we prove
\begin{theorem}\label{thm-deg1}
For even $d\ge 6$, suppose $f=H_f\hat f\not\in I(d)$ is stable and $\deg\hat f=1$. If $f^n$ is not stable, then $[f]\in I(\Phi_n)$.
\end{theorem}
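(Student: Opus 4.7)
The plan is to exhibit two families $\{f_t^{(1)}\}, \{f_t^{(2)}\}\subset \mathrm{Rat}_d^s$ that converge to $f$ in $\overline{M}_d$ but whose iterates have distinct limits in $\overline{M}_{d^n}$; this will certify that $\Phi_n$ has no regular extension at $[f]$, so $[f]\in I(\Phi_n)$.

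First I would analyze why $f^n$ fails to be stable. Iterating $f=H_f\hat f$ yields
$$f^n=\Bigl(\prod_{i=0}^{n-1}(H_f\circ\hat f^i)^{d^{n-1-i}}\Bigr)\hat f^n,$$
so the base multiplicity of $f^n$ at a point $P\in\mathbb{P}^1$ equals $W_n(P):=\sum_{i=0}^{n-1}d^{n-1-i}v_{\hat f^i(P)}(H_f)$. Silverman's criterion, specialized to the case $\deg\hat f=1$, combined with stability of $f$, gives $v_P(H_f)\le d/2-1$ whenever $P$ is $\hat f$-fixed and $v_P(H_f)\le d/2$ otherwise; non-stability of $f^n$ forces $W_n(P)$ to exceed the corresponding threshold at some $P$. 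A direct comparison shows that the offending $P$ must lie on a periodic $\hat f$-orbit of period $m\ge 2$ dividing $n$, with at least one orbit point carrying a zero of $H_f$ of the extremal multiplicity $d/2$. I would then conjugate by $\mathrm{PGL}_2$ to bring this periodic orbit into a convenient normal form.

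Next I would take two perturbations $f_t^{(j)}=f+t\,g_j$ ($j=1,2$) where $g_j$ is a pair of homogeneous degree-$d$ forms chosen generically enough that the components of $f_t^{(j)}$ are coprime and $f_t^{(j)}\in\mathrm{Rat}_d^s$ for $t\ne 0$ small; both $[f_t^{(j)}]$ then converge to $[f]$ in $\overline{M}_d$. To identify the limits of $[f_t^{(j),n}]$ in $\overline{M}_{d^n}$, I would couple the $t\to 0$ degeneration with a destabilizing 1-parameter subgroup $\lambda(s)$ of $\mathrm{PGL}_2$ for $f^n$ produced by Hilbert--Mumford analysis, taking $s$ to be a suitable fractional power of $t$ so that the rescaled family in $\mathbb{P}^{2d^n+1}$ lands in the semistable locus. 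The leading coefficient of this rescaled limit is a polynomial in the entries of $g_j$, and by evaluating a $\mathrm{PGL}_2$-invariant --- for example a symmetric function of the multipliers of a periodic point of $f_t^{(j),n}$ that accumulates on the offending $\hat f$-orbit --- one checks that for generic $g_1,g_2$ the two limits represent distinct $\mathrm{PGL}_2$-orbits of $\mathrm{Rat}_{d^n}^{ss}$ and hence distinct points of $\overline{M}_{d^n}$.

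The main obstacle is this final separation step: ruling out that both leading coefficients lie in the same $\mathrm{PGL}_2$-orbit and thus represent the same GIT point. This is essentially a transversality statement in the Luna slice at $f^n$, and it is where the hypothesis $d\ge 6$ is used: only for $d\ge 6$ does $\deg H_f=d-1$ leave enough room to support the extremal multiplicities along the periodic orbit together with perturbation directions placing the two limits in genuinely different GIT strata. Once the separation is verified, the non-uniqueness of $\lim_{t\to 0}[f_t^{(j),n}]$ is incompatible with regularity of $\Phi_n$ at $[f]$, giving $[f]\in I(\Phi_n)$.
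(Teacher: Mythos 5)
Your overall strategy---produce two stable families converging to $f$ whose $n$-th iterates have different limits in $\overline{M}_{d^n}$---is the same as the paper's, but two of your intermediate claims do not survive scrutiny. First, the structural analysis is wrong: it is \emph{not} true that the offending point $P$ with $d_P(f^n)>d^n/2$ must lie on a $\hat f$-periodic orbit of period $m\ge 2$. When $\hat f$ is conjugate to $[X+Y:Y]$ (the parabolic case, which the paper's Proposition \ref{conjugate} shows is one of exactly two normal forms), $\hat f$ has no periodic points of period $\ge 2$ at all; instability of $f^n$ instead comes from a hole orbit relation along a non-periodic orbit. For instance, $f=(X-Y)^{3}(X-2Y)^{2}[X+Y:Y]$ with $d=6$ has $d_{[1:1]}(f^2)=6\cdot 3+2=20>18$, so $f^2$ is unstable with the offending point $[1:1]$ wandering under $\hat f$. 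Your normal-form reduction therefore misses an entire case that the paper must (and does) treat separately.

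Second, and more seriously, the step you defer---showing the two limits lie in distinct $\mathrm{PGL}_2$-orbits---is the entire content of the theorem, and neither genericity nor the invariant you propose delivers it. The paper's own $d=4$ example is a concrete warning: for $f=(X-Y)^2Y[-X:Y]$, \emph{every} perturbation that moves only the holes (i.e.\ keeps $\hat f_t=\hat f$) produces one and the same limit $[g_2]$ of the second iterates, so ``for generic $g_1,g_2$ the two limits differ'' is false for a natural positive-dimensional family of perturbation directions and cannot be asserted without computation. Moreover the limits in question are degenerate points $H\cdot[\text{const or M\"obius}]$ on the boundary of $\mathrm{Rat}_{d^n}$, where symmetric functions of multipliers of periodic points are not obviously defined or continuous, so your proposed separating invariant is not available as stated. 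The paper resolves this by choosing the two perturbations completely explicitly (both move only the zero set of $H_f$, splitting the depth-$d/2$ factor $(X-Y)^{d/2}$ at $[1:1]$ in two different ways), computing $\lim_{t\to 0}M_t^{-1}\circ g_t^n\circ M_t$ and $\lim_{t\to 0}M_t^{-1}\circ h_t^n\circ M_t$ in closed form as monomial-type maps, and distinguishing them by comparing the depths (or the number) of their holes. This is also where $d\ge 6$ actually enters: it guarantees enough linear factors in $(X-Y)^{d/2}$ to split off distinct simple zeros while keeping all iterate depths strictly below $d^n/2$, so that hole-only perturbations already separate the limits---not a transversality count in a Luna slice. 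As written, your argument establishes the framework but not the theorem.
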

For $f=H_f\hat f\in\mathbb{P}^{2d+1}$, a zero of $H_f$ in $\mathbb{P}^1$ is called to be a hole of $f$. We say $f$ has a hole orbit relation if there exist two holes $h_1$ and $h_2$ (not necessary distinct) of $f$ such that $\hat f^n(h_1)=h_2$ for some $n\ge 1$. For even $d$, if $f\not\in I(d)$ is stable but $f^n$ is not stable, then $f$ has holes orbit relations. Indeed, consider the probability measure $\mu_f$ defined in \cite{De1}. If $f\in \mathrm{Rat}_d^s$ had no hole orbit relations, then $\mu_f(\{z\})\le 1/2$ for all $z\in\mathbb{P}^1$. By \cite[Proposition 3.2]{De2}, $f^n$ would be stable for all $n\ge 1$. The main idea to prove Theorem \ref{thm-deg1} is to weaken the hole orbit relations, in the sense, to construct $f_t$ by perturbing $f$ such that $\mu_{f_t}(\{z\})\le 1/2$ for all $z\in\mathbb{P}^1$. The assumption $\deg\hat f=1$ gives us convenient normal forms for $f$, see Proposition \ref{conjugate}. The assumption $d\ge 6$ allows us to obtain $f_t$ by only perturbing the holes of $f$.

\section{Some Properties}
Recall the indeterminacy locus $I(d)\subset\mathbb{P}^{2d+1}$. If $f\in\mathbb{P}^{2d+1}\setminus I(d)$, then $f^n$ has the formula \cite[Lemma 2.2]{De1} 
$$f^n=(\prod_{k=0}^{n-1}(H_f\circ\hat f^k)^{d^{n-k-1}})\hat f^n.$$
Denote by $\mathrm{Hole}(f)$ the set consisting of the holes of $f$. For $h\in\mathrm{Hole}(f)$, the multiplicity of $h$ as a zero of $H_f$ is its depth, denoted by $d_h(f)$. Let $m_h(\hat f)$ be the local degree of $\hat f$ at $h$. Set $m_h(\hat f)=0$ if $\hat f$ is a constant.
\begin{lemma}\cite[Lemma 2.4]{De2}\label{depth}
If $f\in\mathbb{P}^{2d+1}\setminus I(d)$, then for all $z\in\mathbb{P}^1$
$$d_z(f^n)=d^{n-1}d_z(f)+\sum_{k=1}^{n-1}d^{n-1-k}m_z(\hat f^k)d_{\hat f^k(z)}(f).$$
\end{lemma}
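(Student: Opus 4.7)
My plan is to read off the depth formula directly from the factorization
$$f^n=\left(\prod_{k=0}^{n-1}(H_f\circ\hat f^k)^{d^{n-k-1}}\right)\hat f^n$$
quoted just above the lemma. First I would verify that this already displays the canonical decomposition $f^n=H_{f^n}\hat{f^n}$: because $f\notin I(d)$, the iterate $\hat f^n$ is a genuine rational map whose coordinate polynomials are coprime of degree $(\deg\hat f)^n$, and a total-degree count against $d^n$ (using the geometric identity $(d-\deg\hat f)\sum_{k=0}^{n-1}d^{n-1-k}(\deg\hat f)^k=d^n-(\deg\hat f)^n$) forces the bracketed polynomial to be exactly $H_{f^n}$ with no further cancellation against $\hat f^n$.

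Granted this, additivity of orders of vanishing yields
$$d_z(f^n)=\mathrm{ord}_z\!\prod_{k=0}^{n-1}(H_f\circ\hat f^k)^{d^{n-k-1}}=\sum_{k=0}^{n-1}d^{n-k-1}\,\mathrm{ord}_z(H_f\circ\hat f^k),$$
and the chain rule for orders under composition of holomorphic maps gives $\mathrm{ord}_z(H_f\circ\hat f^k)=m_z(\hat f^k)\cdot\mathrm{ord}_{\hat f^k(z)}(H_f)=m_z(\hat f^k)\,d_{\hat f^k(z)}(f)$. Isolating the $k=0$ term, where $\hat f^0=\mathrm{id}$ and $m_z(\hat f^0)=1$, contributes the leading piece $d^{n-1}d_z(f)$, and the remaining $k\ge 1$ terms assemble into the claimed sum.

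The only real obstacle is the reducedness check in the first paragraph. One has to be a little careful in the degenerate case where $\hat f\equiv c$ is a constant: the convention $m_h(\hat f)=0$ then collapses the sum for $k\ge 1$ to zero, leaving $d^{n-1}d_z(f)$, which matches the direct computation since $f^n$ reduces to a nonzero scalar multiple of $H_f^{d^{n-1}}\,c$ (the values $H_f(c)^{d^{n-k-1}}$ for $k\ge 1$ being nonzero precisely because $c$ is not a hole, i.e.\ because $f\notin I(d)$). Once the canonical decomposition is confirmed in both cases, the rest of the proof is an immediate local order-of-vanishing calculation.
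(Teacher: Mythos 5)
Your derivation is correct, but note that the paper itself offers no proof of this statement: it is imported verbatim as \cite[Lemma 2.4]{De2}, and DeMarco's original argument there proceeds by induction on $n$ using the depth formula for a single composition, $d_z(f\circ g)=d\cdot d_z(g)+m_z(\hat g)d_{\hat g(z)}(f)$ (quoted in this paper as Lemma \ref{composition}). You instead read the formula off in one pass from the explicit factorization $f^n=\bigl(\prod_{k=0}^{n-1}(H_f\circ\hat f^k)^{d^{n-k-1}}\bigr)\hat f^n$, and the two steps you need are both sound: the bracketed product is genuinely $H_{f^n}$ because the reduced coordinates of $\hat f^n$ remain coprime (a common zero would force a common zero of the coordinates of $\hat f$), with your degree identity confirming nothing is lost; and the chain rule $\mathrm{ord}_z(H_f\circ\hat f^k)=m_z(\hat f^k)\,d_{\hat f^k(z)}(f)$ converts each factor into the corresponding summand. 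Your handling of the constant case $\hat f\equiv c$, where $f\notin I(d)$ guarantees $H_f(c)\neq 0$ so the $k\ge 1$ factors are nonzero scalars, closes the one degenerate loophole. The trade-off is minor: the inductive route via Lemma \ref{composition} avoids having to justify the reducedness of the product formula (it is absorbed into the single-composition case), while your route makes the provenance of each term in the sum completely transparent. Either is acceptable.
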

More general, for the composition map, we have
\begin{lemma}\label{composition}\cite[Lemma 2.6]{De2}
The composition map 
$$\mathcal{C}_{d,e}:\mathbb{P}^{2d+1}\times\mathbb{P}^{2e+1}\dashrightarrow\mathbb{P}^{2de+1}$$
sending a pair $(f,g)$ to the composition $f\circ g$ is continuous away from
$$I(d,e)=\{(f,g)=(H_f\hat f,H_g\hat g):\hat g=c\ \text{and}\ H_f(c)=0\}.$$
Furthermore, for each $(f,g)\in\mathbb{P}^{2d+1}\times\mathbb{P}^{2e+1}$ such that $\deg\hat g>0$,
$$d_z(f\circ g)=d\cdot d_z(g)+m_z(\hat g(z))(f).$$
\end{lemma}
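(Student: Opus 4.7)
The plan is to compute $f\circ g$ by direct substitution in homogeneous coordinates, factor out the hole polynomials of $f$ and $g$, and show that the remaining essential composition contributes no further common zeros. Writing $f=[H_fF_a:H_fF_b]$ with $F_a,F_b$ coprime of degree $\deg\hat f$ and $g=[H_gG_a:H_gG_b]$ with $G_a,G_b$ coprime of degree $\deg\hat g$, the homogeneity of $H_fF_a$ and $H_fF_b$ of total degree $d$ gives the factorization
\begin{equation*}
f\circ g \;=\; H_g^d \cdot H_f(G_a,G_b) \cdot \bigl(F_a(G_a,G_b),\; F_b(G_a,G_b)\bigr).
\end{equation*}

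Continuity away from the set where this polynomial pair is identically $(0,0)$ follows automatically, since the coefficients depend polynomially on those of $f$ and $g$. It then remains to identify the vanishing locus. The factor $H_g^d$ is never identically zero, and the final pair is never identically $(0,0)$: coprimality of $F_a,F_b$ gives $V(F_a)\cap V(F_b)=\emptyset$ in $\mathbb{P}^1$, while coprimality of $G_a,G_b$ ensures $(G_a,G_b)\neq(0,0)$ on $\mathbb{C}^2\setminus\{0\}$, so neither component of the pair is the zero polynomial. Hence $f\circ g$ vanishes identically iff $H_f(G_a,G_b)\equiv 0$. If $\deg\hat g\geq 1$ the surjectivity of $\hat g$ makes $H_f(G_a,G_b)$ a nonzero polynomial; if $\hat g\equiv c$ is constant then $H_f(G_a,G_b)=H_f(c)$, which is identically zero precisely when $H_f(c)=0$. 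This identifies the indeterminacy locus as $I(d,e)$.

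For the depth formula under $\deg\hat g\geq 1$, the key observation is that the pair $(F_a(G_a,G_b),F_b(G_a,G_b))$ has no common zero in $\mathbb{P}^1$ at all: a common zero $z$ would force $\hat g(z)\in V(F_a)\cap V(F_b)=\emptyset$. Consequently this pair has constant GCD, so $H_{f\circ g}=H_g^d\cdot H_f(G_a,G_b)$ up to a nonzero scalar. Reading off the vanishing order at $z$ and applying the standard pullback rule $d_z(H_f\circ\hat g)=m_z(\hat g)\cdot d_{\hat g(z)}(H_f)$ yields
\begin{equation*}
d_z(f\circ g) = d\cdot d_z(g) + m_z(\hat g)\cdot d_{\hat g(z)}(f).
\end{equation*}
The main subtlety throughout is this coprimality step: without the observation that $V(F_a)\cap V(F_b)=\emptyset$, extra common factors could appear when composing the reduced parts $\hat f\circ\hat g$, lowering the expected degree of the composition and contributing new terms to $H_{f\circ g}$ that would require separate accounting.
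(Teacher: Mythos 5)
Your argument is correct: the factorization $f\circ g = H_g^d\cdot H_f(G_a,G_b)\cdot\bigl(F_a(G_a,G_b),F_b(G_a,G_b)\bigr)$, the observation that the last pair has no common zero on $\mathbb{C}^2\setminus\{0\}$ (so its gcd is a nonzero constant and $H_{f\circ g}=H_g^d\,H_f(G_a,G_b)$), and the pullback rule for vanishing orders give both the indeterminacy locus $I(d,e)$ and the depth formula. The paper does not prove this lemma --- it is imported from DeMarco's \cite[Lemma 2.6]{De2} --- and your proof is essentially the standard argument given there, so there is nothing to compare beyond noting agreement. One small imprecision: what you need, and what your coprimality argument actually establishes, is that the pair $\bigl(F_a(G_a,G_b),F_b(G_a,G_b)\bigr)$ is nowhere $(0,0)$ away from the origin, hence not identically zero; an individual component can still be the zero polynomial when $\hat g$ is constant (e.g. $\hat f=[X:Y]$, $\hat g\equiv[0:1]$ gives $F_a(G_a,G_b)\equiv 0$), so the phrase ``neither component is the zero polynomial'' should be weakened accordingly.
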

The next lemma states the relations between (semi)stability and depths of holes.
\begin{lemma}\cite[\S 3]{De2}\label{stability}
Let $f=H_f\hat f\in\mathbb{P}^{2d+1}$.
\begin{enumerate}
\item For even $d\ge 2$, $f\in \mathrm{Rat}_d^s$ if and only if 
\begin{enumerate}
\item $d_h(f)\le d/2$ for each hole $h$, and 
\item if $d_h(f)=d/2$, then $\hat f(h)\not=h$.
\end{enumerate}
\item For odd $d\ge 3$, $f\in \mathrm{Rat}_d^s$ if and only if 
\begin{enumerate}
\item $d_h(f)\le(d-1)/2$ for each hole $h$, and 
\item if $d_h(f)=(d-1)/2$, then $\hat f(h)\not=h$,
\end{enumerate}
\item For odd $d\ge 3$, $f\in\mathrm{Rat}_d^{ss}$ if and only if 
\begin{enumerate}
\item $d_h(f)\le(d+1)/2$ for each hole $h$, and 
\item if $d_h(f)=(d+1)/2$, then $\hat f(h)\not=h$.
\end{enumerate}
\end{enumerate}
\end{lemma}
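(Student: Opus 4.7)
The lemma is the classical Hilbert--Mumford analysis of the conjugation action of $\mathrm{SL}_2(\mathbb{C})$ on the affine cone over $\mathbb{P}^{2d+1}$, so my plan is to translate each stability condition into a weight inequality along a one-parameter subgroup and then read off the bounds on the depths.

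Since every nontrivial $1$-PS of $\mathrm{SL}_2(\mathbb{C})$ is conjugate to $\lambda(t) = \mathrm{diag}(t, t^{-1})$, and since (semi)stability is $\mathrm{PGL}_2$-invariant, it suffices to analyze the Hilbert--Mumford weight $\mu(f,\lambda_h)$ of each $1$-PS $\lambda_h$ that fixes a single prescribed point $h\in\mathbb{P}^1$. By a Möbius change of coordinates I would send $h$ to $\infty=[1:0]$ and work with $\lambda(t) = \mathrm{diag}(t,t^{-1})$. Writing $f = [F_a(X,Y):F_b(X,Y)]$ with $F_a,F_b$ homogeneous of degree $d$ and expanding, a direct computation gives
\[
\lambda(t)\cdot f = \bigl[\,t\,F_a(t^{-1}X,tY)\,:\,t^{-1}F_b(t^{-1}X,tY)\,\bigr],
\]
so the monomial $X^iY^{d-i}$ carries $t$-weight $d+1-2i$ in the first coordinate and $d-1-2i$ in the second. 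The Hilbert--Mumford weight $\mu(f,\lambda)$ is then the minimum of these weights over nonvanishing coefficients.

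Next I would encode the hole structure at $\infty$: if $\infty$ has depth $k=d_\infty(f)$, then $Y^k$ divides both $F_a$ and $F_b$, which forces $i\le d-k$ for every nonvanishing monomial and therefore
\[
\mu(f,\lambda) \;\ge\; \min\bigl(2k-d+1,\ 2k-d-1\bigr) \;=\; 2k-d-1,
\]
with equality exactly when $b_{d-k,k}\ne 0$, i.e.\ when the leading $X$-power of $F_b/H_f$ at $\infty$ does not drop. The numerical criterion says $f$ is stable (semistable) iff $\mu(f,\lambda)<0$ (resp.\ $\le 0$) for every nontrivial $1$-PS, and applying the same analysis to $\lambda^{-1}$ symmetrically bounds $\mu$ from above. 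Translating the two inequalities into conditions on $k$ produces the thresholds $d/2$ (even $d$) and $(d\pm1)/2$ (odd $d$) appearing in the statement. At the borderline depth, the strict inequality becomes non-strict, and by the previous computation the remaining weight is killed precisely when the surviving leading coefficient in $F_a/H_f$ places $\hat f(\infty)$ at $\infty$, which is the fixed-point condition $\hat f(h)=h$.

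The genuine obstacle is not the weight calculation but the bookkeeping at the borderline depth: one has to verify that the vanishing pattern of the coefficients $a_{d-k,k}$, $b_{d-k,k}$ (jointly with their $\lambda^{-1}$ counterparts) is \emph{equivalent} to $\hat f(h)=h$ after dividing out the common factor $H_f$, which requires checking that $H_f'(1,0)\ne 0$ never spoils the equivalence. Once this identification is in place, the parity split (stability $=$ semistability when $d$ is even, a genuine gap when $d$ is odd since the weights $d\pm 1 - 2i$ are then odd and cannot equal zero at any integer $i$) falls out of the same inequalities, giving all three cases of the lemma.
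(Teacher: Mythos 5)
The paper does not actually prove this lemma---it is imported verbatim from \cite[\S 3]{De2} (ultimately from Silverman's GIT analysis of $\mathrm{Rat}_d$)---so there is no in-paper argument to compare against; your Hilbert--Mumford computation is precisely the standard proof from those sources, and it is correct in substance. The weight bookkeeping is right: with $d_h(f)=k$ and $h=[1:0]$, every surviving monomial has $i\le d-k$, the minimal weight equals $2k-d-1$ exactly when the top coefficient of $F_b$ survives (equivalently $\hat F_b(1,0)\ne 0$, i.e.\ $\hat f(h)\ne h$ fails to hold... more precisely $\hat f(h)\ne h$), and equals $2k-d+1$ when $\hat f(h)=h$; feeding these into $\mu<0$ versus $\mu\le 0$ yields all three cases. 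Two blemishes: first, your closing parenthetical has the parity backwards---for \emph{even} $d$ the weights $d\pm 1-2i$ are odd and cannot vanish, which is why stability and semistability coincide in even degree, whereas for odd $d$ they are even and the gap opens up. Second, the ``genuine obstacle'' you defer is actually immediate: writing $H_f=Y^k\tilde H$ with $\tilde H(1,0)\ne 0$ by definition of the depth, the coefficient of $X^{d-k}Y^k$ in $F_b$ is $\tilde H(1,0)\hat F_b(1,0)$ up to the leading coefficient bookkeeping, so the borderline condition is exactly $\hat F_b(1,0)=0$, i.e.\ $\hat f(h)=h$ (using that $\hat F_a,\hat F_b$ cannot both vanish at $h$). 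Also, your phrase about $\lambda^{-1}$ ``bounding $\mu$ from above'' is loose---$\lambda^{-1}$ simply produces the analogous inequality at the other fixed point, and ranging over all $1$-PS is the same as ranging over all $h\in\mathbb{P}^1$---but this does not affect the conclusion.
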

%The following lemma claims the non-stability is preserved by iteration.
%\begin{lemma}\label{not-stable}
%For $d\ge 2$, if $f=H_f\hat f\not\in I(d)$ is not stable, then $f^n$ is not %stable for any $n\ge 1$.
%\end{lemma}
%\begin{proof}
%If there exists a hole $h$ of $f$ such that $d_h(f)>\frac{d}{2}$, then by Lemma \ref{depth}, $d_h(f^n)>\frac{d^n}{2}$. If all holes of $f$ have depths $<\frac{d}{2}$, then there exists a hole $h'$ of $f$ such that $d_{h'}(f)=\frac{d}{2}$ and $\hat f(h')=h'$ since $f\not\in Rat_d^s$. Then $d_h(f^n)\ge\frac{d^n}{2}$ and $\hat f^n(h)=h$. Thus $f^n$ is not stable.
%\end{proof}
%To end this section, 
Now we show if $f=H_f\hat f$ satisfies the assumptions in Question \ref{question} (2), then $\hat f$ is nonconstant. 
\begin{proposition}\label{not-constant}
If $f=H_f\hat f\in\mathrm{Rat}_d^s\setminus I(d)$ but $f^n\not\in\mathrm{Rat}_{d^n}^s$, then $\deg\hat f\ge 1$.
\end{proposition}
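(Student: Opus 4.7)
The plan is to prove the contrapositive: assuming $\deg\hat f=0$, so that $\hat f\equiv c$ for some constant $c\in\mathbb{P}^1$, together with $f=H_f\hat f\in\mathrm{Rat}_d^s\setminus I(d)$, I will show $f^n\in\mathrm{Rat}_{d^n}^s$. The hypothesis $f\notin I(d)$ says exactly that $H_f(c)\neq 0$, i.e., $c$ is not a hole of $f$. The case $n=1$ is trivial, so I take $n\ge 2$.

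The first step is to write $f^n$ explicitly. Applying the iterate formula $f^n=\bigl(\prod_{k=0}^{n-1}(H_f\circ\hat f^k)^{d^{n-k-1}}\bigr)\hat f^n$ and noting that $\hat f^k\equiv c$ for every $k\ge 1$, each factor $H_f\circ\hat f^k$ with $k\ge 1$ reduces to the nonzero constant $H_f(c)$, while the $k=0$ factor is $H_f^{d^{n-1}}$. Hence
$$f^n=C\,H_f^{d^{n-1}}\cdot c,\qquad C=\prod_{k=1}^{n-1}H_f(c)^{d^{n-k-1}}\in\mathbb{C}^*.$$
Reading off the standard decomposition, $\hat{f^n}=c$, $H_{f^n}$ is proportional to $H_f^{d^{n-1}}$, the hole set of $f^n$ coincides with that of $f$, and $d_h(f^n)=d^{n-1}d_h(f)$ for every hole $h$.

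The second step is to verify the stability criteria of Lemma \ref{stability} for $f^n$. Since $c$ is not a hole of $f$, it is not a hole of $f^n$ either, so $\hat{f^n}(h)=c\neq h$ for every hole $h$ of $f^n$; the ``fixed hole'' exclusion in Lemma \ref{stability} is thus automatically satisfied and only the depth bound remains. If $d$ is even, stability of $f$ gives $d_h(f)\le d/2$, hence $d_h(f^n)\le d^n/2$, which is exactly what Lemma \ref{stability}(1) requires. If $d$ is odd, then $d_h(f)\le(d-1)/2$, so $d_h(f^n)\le d^{n-1}(d-1)/2<(d^n-1)/2$ for $n\ge 2$, well under the threshold in Lemma \ref{stability}(2).

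I do not anticipate a genuine obstacle: the argument is a direct composition of the iterate formula with the depth bounds in Lemma \ref{stability}. The only subtlety is the interaction with the equality clause for ``fixed holes'' in the stability criterion, which is handled uniformly by the observation that the constant value $c$ of $\hat f$ lies outside the hole set.
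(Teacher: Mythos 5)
Your proof is correct and follows essentially the same route as the paper: assume $\hat f\equiv c$, note that $f\notin I(d)$ forces $c$ to lie outside the hole set, write $f^n=H_f^{d^{n-1}}c$, and conclude stability of $f^n$ from the depth bounds of Lemma \ref{stability}. You simply spell out the odd-degree case and the fixed-hole clause that the paper leaves as ``similar argument''; no substantive difference.
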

\begin{proof}
We prove the case when $d$ is even. Similar argument works for the case when $d$ is odd.\par
Suppose $\hat f\equiv c\in\mathbb{P}^1$. Since $f\in\mathrm{Rat}_d^s\setminus I(d)$, then $c$ is not a hole of $f$ and for any hole $h$ of $f$, we have $d_{h}(f)\le d/2$. Note $f^n=H_f^{d^{n-1}}c$. Thus the depth of each hole of $f^n$ has depth at most $d^n/2$ and $c$ is not a hole of $f^n$. Thus $f^n\in\mathrm{Rat}_{d^n}^s$.
\end{proof}
To end this section, we consider the case when $d$ is odd.
\begin{proposition}\label{odd}
Assume $d$ is odd. Suppose $f=H_f\hat f\in\mathrm{Rat}_d^s\setminus I(d)$ with $\deg\hat f= 1$. If $f^n\not\in\mathrm{Rat}_{d^n}^s$, then $f$ is conjugate to $X^{(d-1)/2}Y^{(d-1)/2}[aY:X]$ for some $a\in\mathbb{C}\setminus\{0\}$. Moreover, $n$ is even and $f^n\in\mathrm{Rat}_{d^n}^{ss}\setminus\mathrm{Rat}_{d^n}^s$.
\end{proposition}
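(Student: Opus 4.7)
The plan is to exploit the fact that $\deg\hat{f} = 1$ makes every local degree $m_z(\hat{f}^k)$ equal to $1$, which collapses the depth formula of Lemma \ref{depth} into a plain $d$-adic sum over the forward $\hat{f}$-orbit of $z$. Combining this with the stability constraint $d_h(f) \le (d-1)/2$ from Lemma \ref{stability}(2), I will first extract a sharp upper bound on the depths appearing in $f^n$, and then argue that the hypothesis ``$f^n$ not stable'' forces a very rigid orbit structure, from which the normal form, the parity of $n$, and semistability will all drop out.

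Concretely, the first step is the estimate
$$d_z(f^n) \;=\; \sum_{k=0}^{n-1} d^{n-1-k}\, d_{\hat{f}^k(z)}(f) \;\le\; \frac{d-1}{2}\cdot\frac{d^n-1}{d-1} \;=\; \frac{d^n-1}{2}.$$
This upper bound is strictly below $(d^n+1)/2$, so Lemma \ref{stability}(3) immediately yields $f^n\in\mathrm{Rat}_{d^n}^{ss}$. In particular, the failure of stability in Lemma \ref{stability}(2) cannot come from depth overflow; it must come from some hole $z$ with $d_z(f^n) = (d^n-1)/2$ and $\hat{f}^n(z) = z$. Equality in the bound then forces every point $\hat{f}^k(z)$, for $0\le k<n$, to be a hole of $f$ of depth exactly $(d-1)/2$.

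Next, let $p$ be the exact $\hat{f}$-period of $z$, which divides $n$. Since $\deg H_f = d-\deg\hat{f} = d-1$, the total hole-depth of $f$ is $d-1$, so the $p$ distinct orbit points satisfy $p\cdot(d-1)/2\le d-1$, i.e.\ $p\le 2$. Stability of $f$ rules out $p=1$ via Lemma \ref{stability}(2)(b), so $p=2$, which forces $n$ even and identifies $\{z,\hat{f}(z)\}$ as the complete hole set of $f$. Conjugating by $\mathrm{PGL}_2(\mathbb{C})$ to send these two holes to $[1:0]$ and $[0:1]$ puts $\hat{f}$ in the form $[aY:X]$ with $a\in\mathbb{C}\setminus\{0\}$ and $H_f = X^{(d-1)/2}Y^{(d-1)/2}$, yielding the claimed normal form. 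The only place requiring real care is checking that the equality condition in the first step really pins down \emph{every} point of the orbit rather than just the endpoints, but this is immediate from the strict positivity of each weight $d^{n-1-k}$.
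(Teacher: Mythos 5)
Your proof is correct and follows essentially the same route as the paper: Lemma \ref{depth} with unit local degrees gives the sharp bound $d_z(f^n)\le (d^n-1)/2$, equality forces the whole $\hat f$-orbit of the offending hole to consist of depth-$(d-1)/2$ holes, and counting against $\deg H_f=d-1$ yields the two-cycle and the normal form. The only cosmetic difference is that you obtain the semistability of $f^n$ and the evenness of $n$ abstractly (from the depth bound and from $2\mid n$ via the exact period of the fixed hole), whereas the paper reads both off an explicit computation of $f_1^n$ in the normal form.
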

\begin{proof}
Since $f^n\not\in\mathrm{Rat}_{d^n}^s$, by Lemma \ref{stability}, there exists a hole $h$ of $f^n$ such that $d_h(f^n)\ge(d^n-1)/2$. By Lemma \ref{depth}, we have 
$$d_h(f^n)=d^{n-1}d_h(f)+\sum_{k=1}^{n-1}d^{n-1-k}d_{\hat f^k(h)}(f).$$
Thus $d_h(f)=(d-1)/2$ and $d_{\hat f^k(h)}(f)=(d-1)/2$ for $1\le k\le n-1$. \par 
Since $f\in\mathrm{Rat}_d^s$, $\hat f(h)\not=h$. Note $\hat f(h)$ is a hole of $f$ with depth $(d-1)/2$. Then $h$ and $\hat f(h)$ are the only holes of $f$. Furthermore, we know $\hat f(\hat f(h))=h$. Now we conjugate $h$ to $0$ and $\hat f(h)$ to $\infty$. Then we get $f$ is conjugate to 
$$f_1([X:Y])=X^{\frac{d-1}{2}}Y^{\frac{d-1}{2}}[aY:X]$$
for some $a\in\mathbb{C}\setminus\{0\}$.\par 
Note 
$$
f_1^n([X:Y])=
\begin{cases}
X^{\frac{d^n-1}{2}}Y^{\frac{d^n-1}{2}}[aY:X]\ &\text{if $n$ is odd},\\
X^{\frac{d^n-1}{2}}Y^{\frac{d^n-1}{2}}[X:Y]\ &\text{if $n$ is even}.
\end{cases}
$$
By Lemma \ref{stability}, if $n$ is odd, $f_1^n\in\mathrm{Rat}_{d^n}^s$, hence $f^n\in\mathrm{Rat}_{d^n}^s$. Therefore, $n$ is even. Again by Lemma \ref{stability}, $f_1^n\in\mathrm{Rat}_{d^n}^{ss}\setminus\mathrm{Rat}_{d^n}^s$, hence $f^n\in\mathrm{Rat}_{d^n}^{ss}\setminus\mathrm{Rat}_{d^n}^s$.
\end{proof}

\section{Proof of Theorem \ref{thm-not-ind}}
First note Lemma \ref{stability} implies the following lemma, which allows us to assume $d\ge 4$. 
\begin{lemma}\label{deg4}
For $d\ge 2$, then
$\mathrm{Rat}_d^s\cap I(d)=\emptyset$ if and only if $d=2$ or $3$.
\end{lemma}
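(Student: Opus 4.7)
The plan is to reduce everything to Lemma \ref{stability} via the observation that $f \in I(d)$ forces $\hat f \equiv c \in \mathbb{P}^1$ for some $c$ with $H_f(c)=0$; in particular $c$ is a hole of $f$ that tautologically satisfies $\hat f(c)=c$. Since $\deg \hat f = 0$, we also have $\deg H_f = d$, hence $\sum_h d_h(f) = d$.

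For the forward direction I would suppose $f \in \mathrm{Rat}_d^s \cap I(d)$ with $d \in \{2,3\}$ and apply Lemma \ref{stability} to the hole $h = c$. Because $\hat f(c)=c$, clause (b) upgrades the weak depth bound to a strict one: $d_c(f) < d/2$ when $d$ is even and $d_c(f) < (d-1)/2$ when $d$ is odd. For $d \in \{2,3\}$ both of these give $d_c(f) \le 0$, contradicting $c \in \mathrm{Hole}(f)$.

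For the reverse direction I would exhibit, for each $d \ge 4$, an explicit element of $\mathrm{Rat}_d^s \cap I(d)$. The idea is to take $\hat f \equiv 0$ and distribute the total depth $d$ among three holes at $0, 1, \infty$, arranging for every hole of maximal allowed depth to lie at $1$ or $\infty$ (so that it differs from $c = 0 = \hat f(\cdot)$). Concretely, I would set
$$f([X:Y]) = [0 : XY^{d/2}(X-Y)^{d/2-1}]$$
for even $d \ge 4$, giving depths $(d_0,d_\infty,d_1)=(1,d/2,d/2-1)$, and
$$f([X:Y]) = [0 : XY^{(d-1)/2}(X-Y)^{(d-1)/2}]$$
for odd $d \ge 5$, giving depths $(d_0,d_\infty,d_1)=(1,(d-1)/2,(d-1)/2)$. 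A direct check against Lemma \ref{stability} finishes the argument.

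The only obstacle is a short piece of arithmetic bookkeeping: one needs $d_c(f) \ge 1$ (since $c$ is a hole) together with the strict inequality $d_c(f) < d/2$ or $d_c(f) < (d-1)/2$, plus enough remaining weight to fill a second hole without exceeding the stability threshold. This is precisely why the cutoff appears at $d = 4$ and I expect no genuine difficulty beyond writing it out.
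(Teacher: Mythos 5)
Your proof is correct and follows the route the paper intends: the paper offers no written argument for this lemma beyond asserting that it is implied by Lemma \ref{stability}, and your two steps---using that $f\in I(d)$ forces a hole $c$ with $\hat f(c)=c$, so that clause (b) of Lemma \ref{stability} kills $d=2,3$, and then writing down explicit stable elements of $I(d)$ with holes at $0,1,\infty$ for every $d\ge 4$---are exactly the verification being left to the reader. The depth bookkeeping in your examples checks out in both parities.
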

The following lemma gives us the lower bound of the number of holes for $f\in\mathrm{Rat}_d^s\cap I(d)$
\begin{lemma}\label{num-hole}
For $d\ge 4$, if $f\in\mathrm{Rat}_d^s\cap I(d)$, then $f$ has at least $3$ holes.
\end{lemma}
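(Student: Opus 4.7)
The plan is to exploit the fact that $f \in I(d)$ forces $\hat f$ to be a constant equal to a hole of $f$, while stability imposes strong depth bounds via Lemma \ref{stability}. Since $\hat f \equiv c \in \mathbb{P}^1$ with $c \in \mathrm{Hole}(f)$, we have $\deg\hat f = 0$, so $H_f$ has degree exactly $d$ and
\[
\sum_{h \in \mathrm{Hole}(f)} d_h(f) \; = \; d.
\]
The key observation is that $\hat f(c) = c$ (because $\hat f$ is the constant map $c$), so $c$ is a fixed hole and Lemma \ref{stability} gives the \emph{strict} inequality at $c$.

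First I would record the sharpened depth bounds. For even $d$, stability gives $d_h(f) \le d/2$ for every hole $h$, and $d_c(f) \le d/2 - 1$ because $\hat f(c) = c$ forces strict inequality (and depths are integers). For odd $d$, the analogous bounds are $d_h(f) \le (d-1)/2$ and $d_c(f) \le (d-3)/2$. Next I would rule out one- and two-hole configurations by direct arithmetic. If $\mathrm{Hole}(f) = \{c\}$, then $d_c(f) = d$, which contradicts $d_c(f) < d/2$ (valid for all $d \ge 2$). If $\mathrm{Hole}(f) = \{c,h\}$ with $h \ne c$, then in the even case
\[
d \;=\; d_c(f) + d_h(f) \;\le\; \Bigl(\tfrac{d}{2}-1\Bigr) + \tfrac{d}{2} \;=\; d-1,
\]
which is impossible, and in the odd case ($d \ge 5$, since $d=3$ is excluded by Lemma \ref{deg4})
\[
d \;=\; d_c(f) + d_h(f) \;\le\; \tfrac{d-3}{2} + \tfrac{d-1}{2} \;=\; d-2,
\]
again impossible. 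Hence $|\mathrm{Hole}(f)| \ge 3$.

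I do not anticipate a serious obstacle: the argument is a direct pigeonhole from Lemma \ref{stability}. The only point requiring care is the correct strict inequality at the hole $c$ (which relies on identifying that the constant map $\hat f \equiv c$ fixes $c$ as a point of $\mathbb{P}^1$), and the fact that the assumption $d \ge 4$ is exactly what makes the two-hole inequality fail; for $d \le 3$ the set $\mathrm{Rat}_d^s \cap I(d)$ is empty by Lemma \ref{deg4}, which is consistent with and complements the present statement.
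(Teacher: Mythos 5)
Your argument is correct and is essentially the paper's own proof: both hinge on the observation that $f\in I(d)$ forces $\hat f\equiv c$ with $c$ a hole fixed by $\hat f$, so Lemma \ref{stability} gives the strict bound $d_c(f)<d/2$ (resp.\ $<(d-1)/2$) while every other hole has depth at most $d/2$ (resp.\ $(d-1)/2$), making the total depth fall short of $\deg H_f=d$ when there are at most two holes. The only difference is cosmetic — you run the inequality directly, whereas the paper first deduces $d_1=d_2=d/2$ and then contradicts it — and you also spell out the (trivial) one-hole case that the paper leaves implicit.
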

\begin{proof}
Let's prove the case $d$ is even. Similar argument works for the case $d$ is odd. Suppose $f$ has two holes, $h_1, h_2$ with depths $d_1,d_2$. Write $f=H_f\hat f$. Then $\deg H_f=d$ and $d_1+d_2=d$. Since $f\in\mathrm{Rat}_d^s$, by Lemma \ref{stability}, $d_1\le d/2$ and $d_2\le d/2$. Hence $d_1=d_2=d/2$. Since $f\in I(d)$, $\hat f$ is a constant which is a hole, say $h_1$. Again by Lemma \ref{stability}, $d_1< d/2$ since $f\in \mathrm{Rat}_d^s$. It is a contradiction. 
\end{proof}
Now we can prove Theorem \ref{thm-not-ind}
\begin{proof}[Proof of Theorem \ref{thm-not-ind}] 
We prove the case when $d$ is even. The perturbations $\{g_t\}$ and $\{h_t\}$ constructed in the proof still work for the case when $d$ is odd.\par 
By Lemmas \ref{deg4} and \ref{num-hole}, we can assume $d\ge 4$ and normalize by conjugation that $f$ has holes at $0,1$ and $\infty$,
$$f([X:Y])=H_f(X,Y)\hat f([X:Y])=X^{d_0}Y^{d_\infty}(X-Y)^{d_1}\prod_{i=2}^k(X-c_iY)^{d_i}[0:1],$$
where $1\le d_1\le d/2$, $1\le d_i<d/2$ for $i\in\{0,2,\cdots,k,\infty\}$ and $c_2,\cdots, c_k$ are distinct points in $\mathbb{C}\setminus\{0,1\}$. \par 
For $t\in\mathbb{C}\setminus\{0\}$, set
$$g_t([X:Y])=H_{f}(X,Y)[t:1]$$
and 
$$h_t([X:Y])=\frac{H_{f}(X,Y)}{Y}[tX:Y].$$
Then $g_t$ and $h_t$ are stable but not in $I(d)$ for sufficiently small $t\not=0$. Moreover, $g_t$ and $h_t$ converges to $f$ as $t\to 0$. Hence $[g_t]$ and $[h_t]$ converge to $[f]$ as $t\to 0$\par 
Note 
$$g_t^n([X:Y])=(H_f(X,Y))^{d^{n-1}}[t:1]$$
and 
$$h_t^n([X:Y])=\prod_{m=0}^{n-1}\left(\frac{H_f(t^mX,Y)}{Y}\right)^{d^{n-1-m}}[t^mX:Y].$$
Then for sufficiently small $t\not=0$, by Lemma \ref{stability}, $g_t^n$ and $h_t^n$ are stable. Set $g_n=\lim\limits_{t\to 0}g_t^n$ and $h_n=\lim\limits_{t\to 0}h_t^n$.
Then we have 
$$g_n([X:Y])=(H_f(X,Y))^{d^{n-1}}[0:1]$$
and 
$$h_n([X:Y])=X^{\frac{d^n-1}{d-1}d_0}Y^{d^{n-1}d_\infty-\frac{d^{n-1}-1}{d-1}d_0}\left(\frac{H_f(X,Y)}{X^{d_0}Y^{d_\infty}}\right)^{d^{n-1}}[0:1].$$
Note $d_0(g_n)<d^n/2$, $d_0(h_n)<d^n/2$ and the depths of all other holes of $g_n$ and $h_n$ are $\le d^n/2$. Thus $g_n$ and $h_n$ are stable. Thus, as $t\to 0$, $\Phi_n([g_t])$ converges to $[g_n]$ and $\Phi_n([h_t])$ converges to $[h_n]$. However, $[g_n]\not=[h_n]$ since $d_0(g_n)\not=d_0(h_n)$. Thus $[f]\in I(\Phi_n)$ for all $n\ge 2$.
\end{proof}

\section{Proof of Theorem \ref{thm-deg1}}
In this section, we prove Theorem \ref{thm-deg1}. From now on, we suppose $d\ge 2$ is even.\par
A degree $1$ rational map can be conjugate to either $z\to z+1$ or $z\to\omega z$ for some $\omega\in\mathbb{C}\setminus\{0\}$ in affine coordinates. Based on this fact, we first give, under the assumptions in Theorem \ref{thm-deg1}, the normal forms of $f$.
\begin{proposition}\label{conjugate}
For any $d\ge 2$, suppose $f=H_f\hat f\in\mathrm{Rat}_d^s\setminus I(d)$ and $\text{deg}\ \hat f=1$. If $f^n$ is not stable, then there exist a degree $d/2-1$ homogeneous polynomial $H(X,Y)$ with $H(1,1)\not=0$ and $\omega\in\mathbb{C}\setminus\{0,1\}$ such that $f$ is conjugate to $(X-Y)^{d/2}H(X,Y)[X+Y:Y]$ or $(X-Y)^{d/2}H(X,Y)[\omega X:Y]$.
\end{proposition}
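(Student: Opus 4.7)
The plan is to use the simplicity of degree-$1$ maps (M\"obius transformations have local degree $1$ everywhere) to pin down the hole structure of $f$, and then use the classification of M\"obius transformations up to conjugation to extract the two normal forms.

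The first step is to show that $f$ has a unique hole of maximal depth $d/2$. Since $\deg\hat f=1$, the polynomial $H_f$ has degree $d-1$ and $m_h(\hat f^k)=1$ for every $h$ and $k$, so Lemma \ref{depth} simplifies to
$$d_h(f^n) = d^{n-1} d_h(f) + \sum_{k=1}^{n-1} d^{n-1-k} d_{\hat f^k(h)}(f).$$
Because hole depths sum to $d-1<d$, no two distinct holes can both have depth $d/2$. If every hole had depth at most $d/2-1$, then the display above combined with the uniform bound $d_z(f)\le d/2-1$ would yield
$$d_h(f^n) \le (d/2-1)\cdot\frac{d^n-1}{d-1} < \frac{d^n}{2},$$
where the strict inequality reduces to $d^n > -(d-2)$; this contradicts the unstability of $f^n$ via Lemma \ref{stability}. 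Thus $f$ has exactly one hole $h_0$ with $d_{h_0}(f)=d/2$, and the other holes contribute a degree-$(d/2-1)$ factor. Stability of $f$ together with $d_{h_0}(f)=d/2$ forces $\hat f(h_0)\neq h_0$, so in particular $\hat f$ is not the identity.

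The second step is to put $\hat f$ in normal form. Since $\hat f\in\mathrm{PGL}_2(\mathbb{C})$ is a non-identity degree-$1$ map, it is conjugate to either $z\mapsto z+1$ (parabolic) or $z\mapsto\omega z$ with $\omega\in\mathbb{C}\setminus\{0,1\}$ (two fixed points). The centralizer of $\hat f$ in $\mathrm{PGL}_2(\mathbb{C})$ is the translation group in the parabolic case and the dilation group in the multiplier case; in both cases it acts transitively on the set of non-fixed points of $\hat f$. Because $h_0$ is not a fixed point of $\hat f$, a further conjugation moves $h_0$ to $1$. In homogeneous coordinates this gives $\hat f=[X+Y:Y]$ or $\hat f=[\omega X:Y]$, and $H_f=(X-Y)^{d/2}H(X,Y)$ with $\deg H=d/2-1$. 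The condition $H(1,1)\neq 0$ follows from stability: otherwise $(X-Y)$ would divide $H$ and the depth at $h_0$ would exceed $d/2$, contradicting Lemma \ref{stability}.

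The main obstacle is the numerical step that forces some hole to have depth exactly $d/2$; once that is established, uniqueness is automatic from the degree count, and the rest is a clean application of the M\"obius classification together with the residual conjugation freedom. A minor sanity check is the boundary case $d=2$, where $H$ becomes a nonzero constant and $h_0$ is the only hole of $f$.
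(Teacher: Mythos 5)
Your proof is correct and follows essentially the same route as the paper: first force a hole of depth exactly $d/2$ via the depth formula of Lemma \ref{depth} (your bound $(d/2-1)\tfrac{d^n-1}{d-1}<d^n/2$ is the paper's bound written differently), then normalize $\hat f$ by the M\"obius classification and move the deep hole to $[1:1]$ by a residual conjugation. The only additions are details the paper leaves implicit --- uniqueness of the deep hole from the degree count $\deg H_f=d-1$, and the centralizer argument justifying the further conjugation --- both of which are fine.
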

\begin{proof}
First there is a hole $h\in\mathbb{P}^1$ of $f$ such that $d_h(f)=d/2$. Indeed, if the depth of each hole is $\le d/2-1$, by Lemma \ref{depth}, for any $z\in\mathbb{P}^1$, we have
\begin{align*}
d_z(f^n)&=d^{n-1}d_z(f)+\sum_{k=1}^{n-1}d^{n-1-k}d_{\hat f^n(z)}(f)\\
&\le d^{n-1}(\frac{d}{2}-1)(1+\sum_{k=1}^{n-1}d^{-k})\\
&<\frac{d^n}{2},
\end{align*}
which contradicts to $f^n$ is not stable. Since $\deg\hat f=1$, we have $f$ is conjugate to either $f_1([X:Y])=H_1(X,Y)[X+Y:Y]$ or $f_2([X:Y])=H_2(X,Y)[\omega X:Y]$ for some $\omega\in\mathbb{C}\setminus\{0\}$. Let $h_1,h_2$ be holes of depth $d/2$ for $f_1,f_2$, respectively. By the stability, we know $h_1\in\mathbb{P}^1\setminus\{[1:0]\}$, $h_2\in\mathbb{P}^1\setminus\{[1:0],[0:1]\}$ and $\omega\not=1$. Thus by conjugating further, we can assume $[1:1]$ is the hole of $f_1$ and $f_2$ with depth $d/2$ and get the conjugate formula for $f$.
\end{proof}
\begin{corollary}\label{deg2}
If $d=2$, then under the assumptions in Proposition \ref{conjugate}, $f$ is conjugate to $(X-Y)[\omega X:Y]$, where $\omega\not=1$ is a $q$-th root of unity for some $1<q\le n$.
\end{corollary}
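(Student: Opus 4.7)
The plan is to apply Proposition~\ref{conjugate} with $d=2$: since $d/2-1 = 0$, the auxiliary polynomial $H(X,Y)$ is a nonzero constant that can be absorbed into an overall scalar, so $f$ is conjugate to either the translation form $f_1([X:Y]) = (X-Y)[X+Y:Y]$ or the scaling form $f_2([X:Y]) = (X-Y)[\omega X:Y]$ with $\omega \in \mathbb{C}\setminus\{0,1\}$. My job is then to eliminate $f_1$ and to pin down exactly which $\omega$ in the $f_2$ case make $f_2^n$ fail to be stable. In both cases I apply Lemma~\ref{depth}: because $\deg\hat f = 1$, the local degrees $m_z(\hat f^k)$ are all $1$, and the unique hole $[1:1]$ of $f$ has depth $1$; hence $d_z(f^n) = \sum_{k:\,\hat f^k(z)=1} 2^{n-1-k}$, to be tested against the stability bound $d^n/2 = 2^{n-1}$ from Lemma~\ref{stability}(1).

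For $f_1$, in affine coordinates $\hat f_1(z) = z+1$, so $\hat f_1^k(z) = 1$ precisely when $z = 1-k$; these values are pairwise distinct for $k=0,\dots,n-1$, so every $z$ picks up at most one weight. The maximum $d_z(f_1^n) = 2^{n-1}$ is therefore attained only at $z = 1$, where $\hat f_1^n(1) = n+1 \ne 1$. Lemma~\ref{stability}(1) then says $f_1^n$ is stable, contradicting the hypothesis and ruling out the translation form.

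For $f_2$, $\hat f_2(z) = \omega z$ gives $\hat f_2^k(z) = 1$ iff $z = \omega^{-k}$. For any $z \ne 1$, the smallest admissible $k$ is at least $1$, so $d_z(f_2^n) \le \sum_{k=1}^{n-1} 2^{n-1-k} = 2^{n-1}-1 < 2^{n-1}$; thus any stability failure must occur at $z=1$. Letting $q_0 \ge 2$ be the multiplicative order of $\omega$, I split three subcases: if $q_0 \le n-1$ then an extra term $\omega^{q_0}=1$ forces $d_1(f_2^n) > 2^{n-1}$; if $q_0 = n$ then $d_1(f_2^n) = 2^{n-1}$ and $\hat f_2^n(1) = \omega^n = 1$; if $q_0 > n$ then $d_1(f_2^n) = 2^{n-1}$ but $\omega^n \ne 1$, so $f_2^n$ is stable. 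Non-stability thus holds iff $q_0 \in \{2,\dots,n\}$, which is exactly the condition that $\omega$ is a $q$-th root of unity for some $1 < q \le n$. The only delicate points are confining instability to $z=1$ and correctly handling the borderline case $q_0 = n$, where failure comes from the dynamical condition $\hat f_2^n(1)=1$ rather than from excess depth.
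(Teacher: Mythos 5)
Your proof is correct, and it is worth comparing with the paper's. The first half --- ruling out the translation form $(X-Y)[X+Y:Y]$ by noting that each $z$ meets the backward orbit of the hole $[1:1]$ under $z\mapsto z+1$ at most once, so the depth bound $2^{n-1}$ is attained only at $z=1$ where $\hat f_1^n(1)=n+1\neq 1$ --- is essentially identical to the paper's argument. The second half is where you diverge: the paper simply invokes \cite[Lemma 5.2]{De2} to conclude that $\omega$ must be a $q$-th root of unity with $1<q\le n$, whereas you rederive this from scratch via Lemma \ref{depth} and Lemma \ref{stability}, splitting on the multiplicative order $q_0$ of $\omega$ and correctly isolating the borderline case $q_0=n$, where instability comes not from excess depth but from the fixed-hole condition $d_{[1:1]}(f_2^n)=2^{n-1}$ together with $\widehat{f_2^n}([1:1])=[1:1]$ (legitimate since $\widehat{f^n}=\hat f^n$ when $f\notin I(d)$ and $\deg\hat f\ge 1$). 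Your version buys self-containedness and in fact establishes the converse as well (stability of $f_2^n$ whenever $q_0>n$), at the cost of a few extra lines; the paper's citation is shorter but leans on an external lemma. Both are sound.
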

\begin{proof}
First we show $f$ is not conjugate to $f_1([X:Y])=(X-Y)[X+Y:Y]$. Suppose not. Then, by Lemma \ref{depth}, for any $h\in\mathbb{P}^1\setminus\{[1:1]\}$, we have 
$$d_h(f_1^n)\le\sum_{k=1}^{n-1}2^{n-1-k}<2^{n-1}.$$
For $h=[1:1]$, we have 
$d_{[1:1]}(f_1^n)=2^{n-1}$ and $\hat{f}_1^n([1:1])\not=[1:1]$. Thus $f_1^n$ is stable. It is a contradiction.\par 
Then $f$ is conjugate to $(X-Y)[\omega X:Y]$ for some $\omega\in\mathbb{C}\setminus\{0,1\}$. By \cite[Lemma 5.2]{De2}, $\omega$ is a $q$-th root of unity for some $1<q\le n$.
\end{proof}
Now we can prove Theorem \ref{thm-deg1}.
\begin{proof}[Proof of Theorem \ref{thm-deg1}]
%If $d=2$, by Corollary \ref{deg2}, we can assume $f([X,Y])=(X-Y)[\omega X:Y]$, where $\omega\not=1$ is a $q$-th root of unity for some $1<q\le n$. According to \cite[Theorem 5.1]{De2}, we know $[f]\in I(\Phi_n)$.\par 
%Now we assume $d>2$. 
According to Lemma \ref{conjugate}, we have two cases.\par
Case I: We first assume 
$$f([X:Y])=H_f(X,Y)\hat f([X:Y])=(X-Y)^\frac{d}{2}H(X,Y)[X+Y:Y],$$
where $H(X,Y)$ is a degree $d/2-1$ homogeneous polynomial with $H(1,1)\not=0$.\par 
Since $f^n$ is not stable, there exists $h\in\mathbb{P}^1$ such that $d_h(f^n)\ge d^n/2$. Note for any $z\not=[1:1]$, by Lemma \ref{depth},
\begin{align*}
d_z(f^n)&\le d^{n-1}d_z(f)+\sum_{k=1}^{n-1}d^{n-1-k}\frac{d}{2}\\
&\le d^{n-1}(\frac{d}{2}-1)+\frac{d^n}{2}\frac{1}{d-1}(1-\frac{1}{d^{n-1}})\\
&<\frac{d^n}{2}.
\end{align*}
Thus $h=[1:1]$. Furthermore, by Lemma \ref{depth}, we have 
$$d_{[1:1]}(f^n)\le\frac{d^n}{2}+\sum_{k=1}^{n-1}d^{n-1-k}(\frac{d}{2}-1)<d^{n}-d^{n-1}.$$
Note $\hat f^n([1:1])\not=[1:1]$. Since $f^n$ is not stable, by Lemma \ref{stability}, $d_{[1:1]}(f^n)>d^n/2.$\par
Consider
$$g_t([X:Y])=H_{g_t}(X,Y)\hat g_t([X:Y])=(X-(1+t)Y)^\frac{d}{2}H(X,Y)[X+Y:Y]$$
and 
$$h_t([X:Y])=(X-(1+t)Y)^{\frac{d}{2}-1}(X-Y)H(X,Y)[X+Y:Y].$$
Then for sufficiently small $t\not=0$, $g_t$ and $h_t$ are stable, and as $t\to 0$, $g_t\to f$ and $h_t\to f$. Set $M_t([X:Y])=[tX+Y:Y]$. Let $g_n=H_{g_n}\hat g_n:=\lim\limits_{t\to 0}M_t^{-1}\circ g_t^n\circ M_t$. Note 
$$M_t^{-1}\circ\hat g^n_t\circ M_t=[tX+nY:tY]\to Y[1:0].$$
Thus $\hat g_n([X:Y])=[1:0]$. Note 
$$\mathrm{Hole}(g_t^n)=\bigcup_{k=0}^{n-1}\hat g_t^{-k}(\mathrm{Hole}(g_t)).$$
Then 
$$\mathrm{Hole}(M_t^{-1}\circ g_t^n\circ M_t)=M_t^{-1}(\bigcup_{k=0}^{n-1}\hat g_t^{-k}(\mathrm{Hole}(g_t))).$$
Since $f^n$ is not stable, there exists a hole relation: there is $1<k_0<n$ such that $\hat f^{k_0}([1:1])$ is a zero of $H(X,Y)$. Since the perturbations are only at holes, $\hat f=\hat g_t$. Thus $[1:1]\in \mathrm{Hole}(g_t^n)$. Conjugation by $M_t^{-1}$ sends holes $[1+t:1],[1:1]$ and $[1:0]$ of $g_t^n$ to holes $[1:1],[0:1]$ and $[1:0]$ of $M_t^{-1}\circ\hat g^n_t\circ M_t$, respectively, and, as $t\to 0$, $M_t^{-1}(a_t)\to [1:0]$ for any $a_t\in\mathrm{Hole}(g_t^n)\setminus\{[1+t:1],[1:1],[1:0]\}$. Thus 
$$\mathrm{Hole}(g_n)=\{[0:1],[1:1],[1:0]\}.$$ 
Moreover, by Lemma \ref{depth}, for sufficiently small $t\not=0$, $d_{[1+t:1]}(g_t^n)=d^n/2$. Thus $d_{[1:1]}(g_n)=d^n/2$. So we have  
$$M_t^{-1}\circ g_t^n\circ M_t\to g_n=(X-Y)^\frac{d^n}{2}X^{d_0}Y^{d_\infty}[1:0],$$
where $d_0\ge 1$. Thus $d_\infty<d^n/2$, hence $g_n$ is stable. \par 
Now we claim 
$$d_{[1:1]}(f^n)=d_0+\frac{d^n}{2}.$$
Indeed, by the construction of $g_t$, we have 
$$d_{[1:1]}(f^n)=d_{[1+t:1]}(g_t^n)+d_{[1:1]}(g_t^n).$$
The claim then follows from the equality $d_{[1:1]}(g_t^n)=d_{[0:1]}(g_n)=d_0$.\par
Similarly, we have 
$$M_t^{-1}\circ h_t^n\circ M_t\to h_n:=(X-Y)^{(\frac{d}{2}-1)d^{n-1}}X^{d'_0}Y^{d'_\infty}[1:0],$$
where $d'_0=d_{[1:1]}(f^n)-(d/2-1)d^{n-1}>d^{n-1}$. Thus $d'_\infty< d^n/2$, hence $h_n$ is stable.\par
If $[g_n]=[h_n]$, there exists $M\in\mathrm{PGL}_2(\mathbb{C})$ such that $M^{-1}\circ g_n\circ M=h_n$ since $g_n$ and $h_n$ are stable. This conjugacy $M$ must send holes of $h_n$ to holes of $g_n$, preserving their depths, and map $[1:0]$ to $[1:0]$.  Note $d_{[1:1]}(g_n)\not=d_{[1:1]}(h_n)$. Then $M([1:1])=[0:1]$. Hence $d_0=(d/2-1)d^{n-1}$. Thus $d_{[1:1]}(f^n)=d^{n}-d^{n-1}$. It is a contradiction. Thus $[g_n]\not=[h_n]$. So $[f]\in I(\Phi_n)$.\par 
Case II: Now we assume 
$$f([X:Y])=(X-Y)^\frac{d}{2}H(X,Y)[\omega X:Y],$$
where $H(X,Y)$ is a degree $d/2-1$ homogeneous polynomial with $H(1,1)\not=0$ and $\omega\in\mathbb{C}\setminus\{0,1\}$. \par 
If $\omega$ is not a $q$-th root of unity for all $1<q\le n$, set
$$g_t([X:Y])=(X-(1+t)Y)^\frac{d}{2}H(X,Y)[\omega X:Y]$$
and 
$$h_t([X:Y])=(X-(1+t)Y)^{\frac{d}{2}-1}(X-Y)H(X,Y)[\omega X:Y].$$
Applying the same argument in Case I, we know $[f]\in I(\Phi_n)$.\par 
If $\omega$ is a $q$-th root of unity for some $1<q\le n$, we may assume $\omega$ is a primitive $q$-th root of unity and $n=kq+r$ for some $0\le r<q$. First note for any $m\ge 1$, by Lemma \ref{depth}, we have for any $z\in\mathbb{P}^1\setminus\{[1:1]\}$ , 
$$d_{z}(f^m)\le d^{m-1}(\frac{d}{2}-1)+\sum_{k=1}^{m-1}d^{m-1-k}\frac{d}{2}<\frac{d^m}{2}$$ and 
$$d_{[1:1]}(f^m)<\frac{d^m}{2}+\sum_{k=1}^{m-1}d^{m-1-k}\frac{d}{2}=\frac{d^{m+1}-d}{2(d-1)}.$$
Set 
$$g_t([X:Y])=(X-(1+t)Y)^{\frac{d}{2}-2}(X-(1-t)Y)(X-(1+2t)Y)H(X,Y)[\omega X:Y]$$
and 
$$h_t([X:Y])=(X-(1+t)Y)^{\frac{d}{2}-1}(X-Y)H(X,Y)[\omega X:Y].$$
Let $M_t([X:Y])=[tX+Y:Y]$. Then 
$$M_t^{-1}\circ g_t^q\circ M_t\to g_q:=(X-Y)^{(\frac{d}{2}-2)d^{q-1}}(X+Y)^{d^{q-1}}(X-2Y)^{d^{q-1}}X^{d_0}Y^{d_\infty}[X:Y],$$
where $d_0=d_{[1:1]}(f^q)-d^q/2$. Thus $d_h(g_q)<d^q/2$ for any $h\in\mathbb{P}^1$. Hence $g_q$ is stable. Similarly, we have 
$$M_t^{-1}\circ h_t^q\circ M_t\to h_q:=(X-Y)^{(\frac{d}{2}-1)d^{q-1}}X^{d'_0}Y^{d'_\infty}[X:Y],$$
where $d'_0=d_{[1:1]}(f^q)-(d/2-1)d^{q-1}$. Note $d^{q-1}\le d'_0< d^q/2$. Thus $d'_\infty< d^q/2$. Hence $h_q$ is stable. Note $g_q$ and $h_q$ are not in $I(d^q)$. So we have 
$$M_t^{-1}\circ g_t^{kq}\circ M_t=(M_t^{-1}\circ g_t^q\circ M_t)^k\to g_q^k$$
and 
$$M_t^{-1}\circ h_t^{kq}\circ M_t=(M_t^{-1}\circ h_t^q\circ M_t)^{k}\to h_q^k.$$
By Lemma \ref{depth}, we know for any $z\in\mathbb{P}^1$, $d_z(g_q^k)< d^{qk}/2$ and $d_z(h_q^k)< d^{qk}/2$. Thus $g_q^k$ and $h_q^k$ are stable.\par
Similarly, we have 
$$M_t^{-1}\circ g_t^{r}\circ M_t\to g_r:=(X-Y)^{(\frac{d}{2}-1)d^{r-1}}(X+Y)^{d^{r-1}}X^{\tilde{d}_0}Y^{\tilde{d}_\infty}[1:0]$$
and 
$$M_t^{-1}\circ h_t^{r}\circ M_t\to h_r:=(X-Y)^{(\frac{d}{2}-1)d^{r-1}}X^{\tilde{d'}_0}Y^{\tilde{d'}_\infty}[1:0].$$
We have for any $z\in\mathbb{P}^1$, $d_z(g_r)\le d^{r}/2$ and $d_z(h_r)\le d^{r}/2$.\par 
If $r\not=0$, by Lemma \ref{composition}, we have 
$$M_t^{-1}\circ g_t^{n}\circ M_t\to g_r\circ g_q^k$$
and 
$$M_t^{-1}\circ h_t^{n}\circ M_t\to h_r\circ h_q^k.$$
Moreover, for any $z\in\mathbb{P}^1$, 
$$d_z(g_r\circ g_q^k)\le(\frac{d^{qk}}{2}-1)d^r+\frac{d^r}{2}<\frac{d^n}{2}$$
and 
$$d_z(h_r\circ h_q^k)\le(\frac{d^{qk}}{2}-1)d^r+\frac{d^r}{2}<\frac{d^n}{2}.$$
Thus $g_r\circ g_q^k$ and $h_r\circ h_q^k$ are stable.\par 
Now let
$$
g_n=
\begin{cases}
g_q^k\ &\text{if $r=0$},\\
g_r\circ g_q^k\ &\text{if $r\not=0$}.
\end{cases}\ 
\text{and} \ 
h_n=
\begin{cases}
h_q^k\ &\text{if $r=0$},\\
h_r\circ g_q^k\ &\text{if $r\not=0$}.
\end{cases}
$$
Since $g_n$ and $h_n$ have different numbers of holes, $[g_n]\not=[h_n]$. Thus $[f]\in I(\Phi_n)$.
\end{proof}

For $d=4$, suppose $f=H_f\hat f$ satisfies assumptions in Theorem $\ref{thm-deg1}$. If, up to conjugation, $\hat f([X:Y])=[X+Y:Y]$, the argument in the proof of theorem $\ref{thm-deg1}$ still shows $[f]\in I(\Phi_n)$. However, if $\hat f$ is conjugate to $[\omega X:Y]$, where $\omega\in\mathbb{C}\setminus\{0,1\}$ is a $q$-th root of unity for some $1<q\le n$, the method of proof of Theorem $\ref{thm-deg1}$, in which only holes are perturbed, breaks down. 
\begin{example}
{\fontfamily{cmr}\selectfont Consider}
$$f([X:Y])=H_f(X,Y)\hat f([X:Y])=(X-Y)^2Y[-X:Y]$$
and take $n=2$.
Note $f\in\mathrm{Rat}_4^s$ but $f^2\not\in\mathrm{Rat}_{16}^s$. Let $g_t$ and $h_t$ be as in the proof of Theorem $\ref{thm-deg1}$. Then 
$$g_2([X:Y])=h_2([X:Y])=X^4(X-Y)^4Y^7[X:Y].$$
So $[g_2]=[h_2]$. In fact, if $f_t=H_{f_t}\hat f_t$ is a perturbation of $f$ by perturbing the holes of $f$, i.e. $\hat f_t=\hat f$. Then $[f^2_t]$ converges to $[g_2]$.\par 
However, in this example, if we perturb the holes of $f$ and $\hat f$ simultaneously, we may get a different limit. Set 
$$f_t([X:Y])=(X-(1+t)Y)(X-Y)[-XY:tX^2+Y^2].$$
and $M_t([X:Y])=[tX+Y:Y]$. Then 
$$M_t^{-1}\circ f_t^2\circ M_t\to f_2:=X^4(X-Y)^4Y^7[X-2Y:Y].$$
We have $[f_2]\not=[g_2]$. Hence $[f]\in I(\Phi_2)$.
\end{example}

\bibliographystyle{siam}
\bibliography{references}

%%%%%%%%%%%%%%%%%%%%%%%%%%%%%%%%%%%%%%%%%%%%%%%%%%%%%%%%%%%%%%%%%%%%%
\end{document}